\numberwithin{equation}{section}
\newtheorem{theorem}{Theorem}[section]
\newtheorem{lemma}{Lemma}[section]
\newcommand{\ve}{\varepsilon}
\theoremstyle{definition}
\theoremstyle{remark}
\begin{document}

\title{Composite media, almost touching disks \\ and the maximum principle}

\author[Y. Li]{Yanyan Li}
\address{Department of Mathematics, Rutgers University, 110 Frelinghuysen
Road, Piscataway, NJ 08854, USA}

\author[B. Weinkove]{Ben Weinkove}
\address{Department of Mathematics, Northwestern University, 2033 Sheridan Road, Evanston, IL 60208, USA.}

\thanks{Y.Y. Li is partially supported by NSF grant DMS-2247410.  B. Weinkove is partially supported by NSF grant DMS-2348846 and  the Simons Foundation.}

\begin{abstract}
We consider the setting of two disks in a domain in $\mathbb{R}^2$ which are almost touching and have finite and positive conductivities, giving rise to a divergence form elliptic equation with discontinuous coefficients.  We use the maximum principle to give a new proof of a gradient bound of Li-Vogelius.
\end{abstract}

\maketitle

\maketitle

\section{Introduction} \label{sectionintro}

Let $B^+$ and $B^-$ be two disks of radius $1$, compactly contained in a bounded open set $\Omega \subset \mathbb{R}^2$ with smooth boundary $\partial \Omega$.  Assume that $B^+$ is centered at the origin and $B^-$ is centered at $(0,-2-2\delta)$ so that the disks are a distance $2\delta$ apart.  The horizontal line $x_2=-1-\delta$ is midway between $B^-$ and $B^+$.
We allow $\delta$ to vary in $(0,\delta_0)$ for some $\delta_0>0$ and our goal is to prove results which are independent of $\delta$.

Define the conductivity,
$$a(x): =  \begin{cases} \kappa^+, \quad & x\in B^+ \\ \kappa^-, \quad & x\in B^- \\ 1, \quad & x \in \Omega \setminus( B^+ \cup B^-),  \end{cases}$$
where $\kappa^+, \kappa^-$ are fixed positive constants.  See Figure \ref{figure1}.
We  fix a smooth function $\varphi$ on $\partial{\Omega}$ and consider the Dirichlet problem 
\begin{equation} \label{equation}
\begin{split}
 (au_i)_i= {} & 0, \quad \emph{on } \Omega \\
u ={}& \varphi, \quad \emph{on } \partial \Omega,
\end{split}
\end{equation}
where we use the usual summation convention for repeated indices $i=1,2$.

\begin{figure}[h]  
\begin{tikzpicture}
       \draw (3.415cm,3.8cm) circle (.8cm);
 \draw (3.415cm, 1.9cm) circle (.8cm);
 \draw [->] plot [smooth cycle, tension=0.9] coordinates { (1cm,.5cm) (4.5cm, .6cm) (5.5cm,2.5cm) (6cm, 4.5cm) (2.5cm,5.2cm) (1.2cm,3.5cm)};
    \node(A) at (3.75, 2.90cm) {$\scriptstyle 2\delta$};
        \node(C) at (2, 2.90cm) {$\scriptstyle a=1$};
            \node(B) at (3.47, 3.82cm) {$\scriptstyle \, \, a=\kappa^+$};
    \node(C) at (3.47, 1.93cm) {$\scriptstyle \, \, a=\kappa^-$};
\node(D) at (2.4, 4.2cm) {$\scriptstyle B^+$};
\node(E) at (2.35, 1.7cm) {$\scriptstyle B^-$};
\node(F) at (6cm, 4.8cm) {$\scriptstyle \Omega$};
\draw[<->] (3.417,3.0) -- (3.417,2.7);
\end{tikzpicture}
\caption{The domain $\Omega$ contains the disks $B^+$ and $B^-$, a distance $2\delta$ apart.} \label{figure1}
\end{figure}
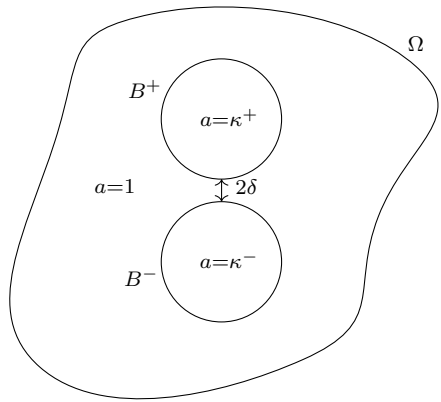

Physically, this situation arises in the study of composite media with almost touching inclusions of different conductivities.  The function $u$ represents the electric potential. From the point of view of material science it is important to estimate the magnitude of the electric field $|\nabla u|$.  Early work of Babuska-Andersson-Smith-Levin \cite{BASL} and Bonnetier-Vogelius \cite{BV} suggested that $|\nabla u|$ should remain bounded independently of $\delta$ and this was established in all dimensions and for general shaped inclusions by the first-named author and Vogelius \cite{LV}.  This was extended to systems by Li-Nirenberg \cite{LN}.  

There is also interest in the degenerate cases when $\kappa^+$ and  $\kappa^-$ tend to zero, ``the insulated conductivity problem'', or to infinity, ``the perfect conductivity problem''.  In both cases the gradient $|\nabla u|$ becomes unbounded as $\delta$ tends to zero.  These types of problems have led to a large and growing body of research, and we refer the reader to the references \cite{ACKLY, AKLLL, AKL, BLY0, BLY, BC, DL, DLY, DLY2,  KLY, K, LY, LYu, M, Y1, Y2, Y3}, a list which is far from complete.

Given the elementary nature of these problems, it is natural to ask whether there are elementary proofs. In \cite{W}, the second-named author introduced a maximum principle method to give a new proof of an estimate of Bao-Li-Yin \cite{BLY} for the insulated conductivity problem, and some new  effective bounds in dimensions $n \ge 4$ (this came after work of Li-Yang \cite{LY} in dimensions $n\ge 3$ using other techniques).  Since then, Dong-Li-Yang \cite{DLY, DLY2} have proved optimal bounds for the insulated conductivity problem by other methods.  

We  note that the maximum principle also played a role in some earlier work such as that of Bonnetier-Vogelius \cite{BV}.
 
We speculate that the maximum principle alone can be used to reprove \emph{all} the existing results in this literature, including the optimal ones.  We expect that doing so would lead to important new insights and results.

  This paper makes a step in this direction by using a maximum principle method to prove a gradient estimate for the problem (\ref{equation}) above with fixed positive constants $\kappa^+, \kappa^-$.

For $\mu>0$, we write $\Omega^{\mu} = \{ x\in \Omega \ | \ d(x, \partial \Omega) \ge \mu\}$.  Fix $\mu \in (0,1)$ sufficiently small so that $B^+\cup B^-$ is contained in $\Omega^{2\mu}$.
We give a maximum principle proof of the following result, originally due to Li-Vogelius \cite{LV}.

\begin{theorem} \label{sa} Let $u\in H^1(\Omega)$ be the unique weak solution of
\begin{equation}
\begin{split}
 (au_i)_i= {} & 0, \quad \emph{on } \Omega \\
u ={}& \varphi, \quad \emph{on } \partial \Omega.
\end{split}
\end{equation}
Then there exists a constant $C$ depending only on $\Omega$, $\mu$, $\kappa^+$ and $\kappa^-$ such that
\begin{equation} \label{nue}
 \| \nabla u \|_{L^{\infty}(\Omega^{\mu})} \le C \| \varphi \|_{L^{\infty}(\partial \Omega)}.
\end{equation}
\end{theorem}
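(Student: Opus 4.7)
The plan is to reduce the estimate to a uniform gradient bound in a small neighborhood $R$ of the line segment joining the closest points of $\partial B^+$ and $\partial B^-$, and then to carry out the maximum principle argument on $R$. The initial step is the classical divergence-form maximum principle, which gives $\|u\|_{L^\infty(\Omega)} \le \|\varphi\|_{L^\infty(\partial\Omega)}$. Away from the narrow gap (and away from $\partial\Omega$), the two interfaces $\partial B^+$ and $\partial B^-$ are at distance bounded below uniformly in $\delta$, so standard Schauder/transmission estimates applied to each phase (together with the continuity of $u$ and of the flux $a \partial_\nu u$) yield $|\nabla u| \le C \|u\|_{L^\infty}$ with $C$ independent of $\delta$. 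The problem therefore reduces to bounding $|\nabla u|$ on a set of the form $R = B^+_{\mathrm{cap}} \cup G \cup B^-_{\mathrm{cap}}$, where $B^\pm_{\mathrm{cap}}$ is a small cap of each disk near the bottleneck and $G$ is the gap.

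Within each of the three sub-regions of $R$ the coefficient $a$ is constant, so $u$ is harmonic and $|\nabla u|^2$ is subharmonic. Across each interface $\partial B^\pm$ the tangential derivative $\partial_\tau u$ is continuous and the flux $a \partial_\nu u$ is continuous, and from this one reads off that $|\nabla u|^2$ is discontinuous across $\partial B^\pm$ by a multiplicative factor depending on $\kappa^\pm$ and the local normal component $\partial_\nu u$. I would then apply a maximum principle argument on $R$ to $|\nabla u|^2$ itself, or more likely to a piecewise-weighted variant $\Phi = \beta(x) |\nabla u|^2$ with $\beta$ a piecewise constant chosen to cancel the jumps at the interfaces. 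An interior maximum of $\Phi$ in any single sub-region is ruled out by subharmonicity, so the maximum on $\overline{R}$ must lie either on the outer boundary of $R$ (where the previous paragraph gives control) or on one of the arcs $A^\pm = \partial B^\pm \cap R$. At an interface maximum, the Hopf lemma applied inside the sub-region realizing the larger side of the jump, combined with the transmission identity $\kappa^\pm \partial_\nu u^{\mathrm{in}} = \partial_\nu u^{\mathrm{out}}$, should force a contradiction.

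The main obstacle is making this interface maximum principle precise in the presence of the multiplicative jumps. Even if $\beta$ is chosen so that $\Phi$ is continuous across $\partial B^\pm$, its normal derivative still jumps, and a clean global maximum principle must then take the form of a transmission-Hopf inequality that excludes boundary maxima on $A^\pm$. A further delicate point is the almost-tangency of $A^+$ and $A^-$ at the bottleneck: one must rule out concentration of $|\nabla u|$ in a region of horizontal size $O(\sqrt{\delta})$ near the closest pair of points, which is the only place where the narrowness of the gap interacts with the geometry. I expect that handling this transmission-Hopf step in a fully $\delta$-independent manner, while exploiting the specific circular geometry of the two disks, will be the crux of the argument.
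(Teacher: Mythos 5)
Your reduction to a neighborhood of the bottleneck, the $L^\infty$ bound on $u$, and the observation that $|\nabla u|^2$ is subharmonic in each phase all match the paper's setup. But the core of your argument---a maximum principle for $\Phi=\beta(x)|\nabla u|^2$ with $\beta$ piecewise constant ``chosen to cancel the jumps at the interfaces''---has a genuine gap that cannot be repaired in that form. The transmission conditions give that $\partial_\tau u$ and $a\,\partial_\nu u$ are \emph{separately} continuous across $\partial B^\pm$, so the jump of $|\nabla u|^2=(\partial_\tau u)^2+(\partial_\nu u)^2$ is not a fixed multiplicative factor: it varies pointwise along the interface between $1$ and $(\kappa^\pm)^{\mp 2}$ depending on the direction of $\nabla u$ relative to the circle. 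No piecewise-constant $\beta$ makes $\Phi$ continuous, and consequently the ``transmission--Hopf'' inequality you defer to is not merely delicate---there is no single scalar quantity of the form $\beta(x)|\nabla u|^2$ to which it could apply. You correctly flag this step as the crux, but it is precisely the step that requires a different idea.

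The paper's resolution is to abandon $|\nabla u|^2$ near the interface and instead work with the two quantities $T=-x_2u_1+x_1u_2$ and $R=a(x_1u_1+x_2u_2)$, which are (smoothed versions of) the tangential derivative and the flux---exactly the two quantities that \emph{are} continuous across the circle. After mollifying $a$ to a radial function $a_\ve$ near $\partial B^\pm$, Lemmas 3.1--3.3 show that every fixed linear combination $\alpha T+\beta R$ satisfies a second-order equation with no zeroth-order term, hence a maximum principle, on an annular neighborhood $W^+$ of $\partial B^+$. This immediately reduces everything to bounding $|\nabla u|$ on the fixed-width segment $S$ in the middle of the gap (Lemma 4.1), which also disposes of your worry about concentration at horizontal scale $O(\sqrt\delta)$: the quantity $|\nabla u|^2+Au^2$ is genuinely subharmonic on the thin rectangle $E\supset S$ where $a\equiv 1$, its boundary maximum $p$ is rewritten as $(\alpha T+\beta R)^2(p)/|p|^2$ for suitable constants $\alpha,\beta$, and the bound at $p$ is obtained by maximizing $\frac{(\alpha T+\beta R)^2}{|x|^2}(1+b)+Au^2$ over $W^+$ with a carefully chosen radial weight $b$ whose Laplacian produces a term $Ka^{K-1}|\nabla a|$ large enough to absorb the unbounded $|\nabla a|$ contributions from the equation for $\alpha T+\beta R$. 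So while your diagnosis of where the difficulty sits is accurate, the proposal is missing the key structural input---the pair $(T,R)$ and the maximum principle for their linear combinations---without which the interface maximum cannot be excluded.
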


Note that, given this, one can extend the $L^{\infty}$ bound (\ref{nue}) to the whole of $\Omega$  using 
 standard boundary estimates for harmonic functions, which can also be proved using only the maximum principle.  This gives
 $\| \nabla u \|_{L^{\infty}(\Omega)} \le C'$, for $C'$ a constant depending only on $\Omega, \mu, \kappa^+, \kappa^-$ and $\varphi$.

The outline of the paper is as follows.  In Section \ref{section2}, we briefly describe some preliminary results that we need for what follows, including the method of smooth approximation to replace the discontinuous function $a$ with a family of smooth functions $a_{\ve}$.  In Section \ref{section3}, we prove some identities for radial and tangential derivatives of $u$, which hold in a neighborhood of one of the disks.  In Section \ref{section4}, we give the proof of Theorem \ref{sa}.  Finally, in Section \ref{section5}, we end with some remarks on the difficulty of extending to dimensions $n >2$.

\section{Preliminaries} \label{section2}

\subsection{Smooth approximation} We  smoothly approximate the discontinuous function $a$ by smooth functions $a_{\ve}$ for a parameter $\ve$ with $0<\ve<<\delta$.  We assume that in an $\varepsilon$-neighborhood of $\partial B^+$,  the function $a_{\ve}=a_{\ve}(r)$ is radial, where $r$ denotes the distance from the center of $B^+$.  Moreover, we assume that $a_{\ve}$ is nonincreasing if $\kappa^+ >1$ and nondecreasing if $\kappa^+ <1$.  We do the same in an $\varepsilon$-neighborhood of $B^-$.  We assume that $a_{\ve}=a$ (hence is constant) outside of these $\varepsilon$-neighborhoods.  As $\ve \rightarrow 0$, the functions $a_{\ve}$ are  bounded and  converge pointwise almost everywhere to $a$.

Now let $u_{\ve}$ be the smooth solution of
\begin{equation} \label{euve}
\begin{split}
(a_{\ve} (u_{\ve})_i)_i= {} & 0, \quad \emph{on } \Omega \\
u_{\ve} ={}& \varphi, \quad \emph{on } \partial \Omega.
\end{split}
\end{equation}

By scaling, we may and do assume that $\| \varphi \|_{L^{\infty}(\partial \Omega)} =1$.
Then we aim to prove, using the maximum principle,
$$\sup_{\Omega^{\mu}} | \nabla u_{\ve} | \le C,$$
from which the main theorem will follow by approximation.

\medskip
\noindent
\emph{Important note:} For simplicity of notation, from now on we will drop the subscript $\varepsilon$ and write $u$ for $u_{\ve}$ and $a$ for $a_{\ve}$.

\subsection{$L^{\infty}$ bound on $u$} The maximum principle applied to (\ref{euve}) immediately gives
\begin{equation} \label{lib}
\sup_{\Omega} |u| \le \| \varphi\|_{L^{\infty}(\partial \Omega)}=1.
\end{equation}

\subsection{Classical interior estimate for harmonic functions}  Let $B_{\sigma}(x)$ be a disk of radius $\sigma$ centered at $x \in \Omega$.  If we assume that $B_{\sigma}(x)$ is  compactly contained in one of the three domains $B^+$,  $B^-$ or $\Omega \setminus (B^+\cup B^-)$ then the classical interior estimate for harmonic functions  implies that (shrinking $\ve$ if necesssary),
\begin{equation} \label{classical}
\begin{split}
\sup_{B_{\sigma/2}(x)} |\nabla u| \le {} &  \frac{C_1}{\sigma} \sup_{B_{\sigma}(x)} |u| \\
\end{split}
\end{equation}
where $C_1$ is a universal constant.

\section{Tangential and radial derivatives} \label{section3}

In this section, we work in the neighborhood of $B^+$ given by those points $x$ in $\Omega$ with $x_2>-1-\frac{3}{2}\delta$. 
Analogous estimates hold in a neighborhood of $B^-$.   We define two quantities, corresponding to  tangential and radial derivatives respectively.  Define
 $$T = -x_2 u_1 + x_1 u_2$$
 and
 $$R= a (x_1u_1+x_2u_2).$$
 The next lemma shows that  $T$ solves the same equation as $u$ itself, while $R$ solves the same equation with $a$ replaced by $a^{-1}$.
 
\begin{lemma} \label{lemmaTjR} Assume $x \in \Omega$ with $x_2>-1-\frac{3}{2}\delta$.  We have, 
\begin{equation}
aT_{ii}  + a_i T_i =0 \quad \textrm{and} \quad 
a R_{ii} - a_i R_i = 0.
\end{equation}
\end{lemma}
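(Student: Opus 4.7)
The plan is to exploit the radial symmetry of $a$ in this region: by the construction of $a=a_{\ve}$ in Section \ref{section2}, in $\{x_2 > -1-\tfrac{3}{2}\delta\}$ the coefficient depends only on $r:=\sqrt{x_1^2+x_2^2}$, the distance to the center of $B^+$ (for $\ve \ll \delta$ this region avoids the $\ve$-neighborhood of $\partial B^-$). Working in polar coordinates $(r,\theta)$ about the origin, the identities
\begin{equation*}
T = -x_2 u_1 + x_1 u_2 = u_{\theta}, \qquad R = a(x_1 u_1 + x_2 u_2) = a\,r\,u_r,
\end{equation*}
together with the polar form of the PDE
\begin{equation*}
\tfrac{1}{r}\bigl(r a\,u_r\bigr)_r + \tfrac{a}{r^2}\,u_{\theta\theta} = 0,
\end{equation*}
are the basic inputs. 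Since $a=a(r)$ we have $a_i = a_r x_i/r$, so $a_iT_i = a_rT_r$ and $a_iR_i = a_rR_r$, reducing each Cartesian identity to its polar analogue.

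For $T$: differentiating the polar PDE in $\theta$ and using $a_\theta=0$ gives $\tfrac{1}{r}(raT_r)_r + (a/r^2)T_{\theta\theta}=0$, the same equation satisfied by $T$; in Cartesian form this is $aT_{ii} + a_i T_i = 0$. This step is essentially free because the vector field $\partial_\theta=x_1\partial_2-x_2\partial_1$ commutes with the polar operator when $a$ is radial.

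For $R=ar\,u_r$ the computation is more delicate. I would read $R_r = -(a/r)\,u_{\theta\theta}$ directly off the polar PDE, differentiate to get $R_{rr}$ and $R_{\theta\theta}=ra\,u_{r\theta\theta}$, and then expand $a\Delta R - a_r R_r = a[\tfrac{1}{r}(rR_r)_r + \tfrac{1}{r^2}R_{\theta\theta}] - a_r R_r$. The resulting terms involving $u_{\theta\theta}$, $a_r u_{\theta\theta}$, and $u_{r\theta\theta}$ are designed to cancel in pairs, leaving $0$; converting back to Cartesian gives the desired $aR_{ii} - a_iR_i = 0$. The bookkeeping in this cancellation is the main obstacle — several terms of different types must match up, and a sign error would be fatal.

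As a robust cross-check (and an alternative derivation that avoids the raw computation) one can invoke the 2D stream function: since $(au_i)_i=0$ on a simply connected neighborhood, there exists $v$ with $v_1=-au_2$, $v_2=au_1$, and a direct check shows $(a^{-1}v_i)_i=0$. Moreover $x_1 v_2 - x_2 v_1 = a(x_1u_1+x_2u_2) = R$, so $R = v_\theta$. Applying the $T$-argument above to $v$ with the (still radial) coefficient $a^{-1}$ yields $(a^{-1}R_i)_i=0$, which is exactly $aR_{ii} - a_iR_i=0$.
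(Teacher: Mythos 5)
Your proof is correct, but it takes a genuinely different route from the paper. The paper works entirely in Cartesian coordinates: it differentiates $(au_i)_i=0$ once more, records the three identities that radial symmetry imposes on $a_i$ and $a_{ij}$ (e.g.\ $-x_2a_1+x_1a_2=0$), and then verifies $aT_{ii}=-a_iT_i$ and $aR_{ii}=a_iR_i$ by direct expansion. You instead pass to polar coordinates, where $T=u_\theta$ and the first identity is immediate because $\partial_\theta$ commutes with the divergence-form operator when $a$ is radial; and for $R$ you bypass the delicate bookkeeping you (rightly) worry about by introducing the stream function $v$ with $v_1=-au_2$, $v_2=au_1$, checking $(a^{-1}v_i)_i=0$, and observing $R=v_\theta$, so that the $T$-argument applied to $v$ with coefficient $a^{-1}$ yields $(a^{-1}R_i)_i=0$, which is exactly $aR_{ii}-a_iR_i=0$. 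All the individual claims check out: the polar form of the PDE, the identifications $T=u_\theta$ and $R=aru_r=x_1v_2-x_2v_1$, and the computation $(a^{-1}v_i)_i=-u_{21}+u_{12}=0$; the stream function exists at least locally (the identity to be proved is pointwise, so working on small disks suffices even without assuming $\Omega$ simply connected), and $u$ is smooth since one works with the smoothed coefficient $a_\ve$. Your approach buys a conceptual explanation of the $a\leftrightarrow a^{-1}$ duality between $T$ and $R$ that the paper only observes as the outcome of a computation, and it is arguably shorter; the paper's Cartesian computation has the advantage of being self-contained, coordinate-free in presentation, and closer in form to the higher-dimensional identities recorded in Section~\ref{section5}, where the clean two-dimensional conjugate-function structure is no longer available.
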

\begin{proof}
From the equation $(au_i)_i=0$ we have, for $k=1, 2$,
\begin{equation} \label{ed}
\begin{split}
au_{ii} + a_i u_i= {} & 0 \\
au_{iik} + a_k u_{ii} + a_{ik} u_i + a_i u_{ik}= {} & 0.
\end{split}
\end{equation}
Since $a$ is radial, we have,
\begin{equation} \label{radial}
\begin{split}
-x_2 a_1 + x_1 a_2 = {} & 0 \\
-x_2 a_{11} + a_2 + x_1 a_{21} = {} & 0 \\
-a_1-x_2 a_{12} + x_1 a_{22} = {} & 0.
\end{split}
\end{equation}

Using this, we obtain 
\[
\begin{split}
aT_{ii}   & =   a( -x_2u_{1ii} + x_1 u_{2ii}) \\
 \overset{\eqref{ed}}&{=}    x_2 (a_1 u_{ii} + a_{i1}u_i + a_i u_{i1}) - x_1 (a_2 u_{ii} + a_{i2}u_i + a_i u_{i2}) \\
\overset{\eqref{radial}}&{=}  a_2u_1-a_1u_2 + a_i(x_2 u_{1i} - x_1 u_{2i}) \\
& =  -   a_i T_i,
\end{split}
\]
as required.

For $R$, compute 
\[
\begin{split}
aR_{ii} & =  a (ax_ku_k)_{ii} \\
& =  a\left( a_{ii} x_ku_k + ax_k u_{kii} + 2a_ku_k+ 2a_ix_k u_{ki} + 2au_{kk} \right) \\
\overset{\eqref{ed}}&{=} a\left( a_{ii} x_k u_k - x_k a_k u_{ii} - x_k a_{ik} u_i - x_k a_i u_{ik} + 2a_i x_k u_{ki}\right)  \\
& = a( a_{11} x_2u_2 + a_{22}x_1 u_1 - x_2 a_{12} u_1 - x_1 a_{21} u_2 + x_k a_k \frac{a_iu_i}{a} + a_i x_k u_{ki }) \\
\overset{\eqref{radial}}&{=} a ( a_1 u_1 + a_2 u_2  + a_i x_k u_{ki}) + a_k^2 x_i u_i \\
& = a_i R_i,
\end{split}
\]
where we used the first line of (\ref{radial}) to obtain $x_k a_k a_iu_i = a_k^2 x_i u_i$.
\end{proof}

Since $T$ and $R$ do not satisfy the same equation, their linear combinations $\alpha T + \beta R$ will satisfy equations which depend on $\alpha$ and $\beta$.  First, we have the following lemma.

\begin{lemma} \label{lemmaQS2}
We have
\begin{equation} \label{SQ}
\begin{split}
a_1 R_1 + a_2 R_2 = {} & a (a_2 T_1-a_1 T_2) \\
a(a_1 T_1 + a_2 T_2) = {} & -a_2 R_1+ a_1 R_2 
\end{split}
\end{equation}
\end{lemma}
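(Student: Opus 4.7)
The plan is to exploit the radial symmetry of $a$ in the neighborhood of $B^+$ under consideration. Writing $r = |x|$ for the distance from the center of $B^+$, $a = a(r)$ implies $a_i = (a'/r) x_i$. Multiplying each of the two claimed identities by $r/a'$ (valid where $a' \neq 0$, and extended by continuity elsewhere), they reduce to
\[
x_1 R_1 + x_2 R_2 = a(x_2 T_1 - x_1 T_2), \qquad -x_2 R_1 + x_1 R_2 = a(x_1 T_1 + x_2 T_2).
\]
Using $r\partial_r = x_i \partial_i$ and $\partial_\theta = -x_2 \partial_1 + x_1 \partial_2$, these are in turn equivalent to the Cauchy--Riemann type relations
\[
r\,\partial_r R = -a\,\partial_\theta T, \qquad \partial_\theta R = a r\,\partial_r T
\]
between $R$ and $T$ in polar coordinates.

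To verify these I would first observe that $T = \partial_\theta u$ and $R = a r\,\partial_r u$. The angular identity is then immediate: since $a$ does not depend on $\theta$,
\[
\partial_\theta R = a r\,\partial_\theta\partial_r u = a r\,\partial_r\partial_\theta u = a r\,\partial_r T.
\]
For the radial identity, I would rewrite the PDE $(au_i)_i = 0$ in polar coordinates. Since $a = a(r)$, it reads
\[
\partial_r(a r\,\partial_r u) + \frac{a}{r}\,\partial_\theta^2 u = 0,
\]
which is exactly $\partial_r R + (a/r)\,\partial_\theta T = 0$.

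It then remains to convert the polar identities back to Cartesian form via $r\partial_r R = x_i R_i$ and $\partial_\theta R = -x_2 R_1 + x_1 R_2$ (and likewise for $T$), and to multiply through by $a'/r$ so as to reintroduce the factors $a_i = (a'/r)x_i$. I do not foresee any real obstacle; the main potential pitfall is simply sign-bookkeeping in the conversion. An alternative route avoiding polar coordinates altogether is a direct Cartesian expansion of $R_i$ and $T_i$ by the product rule, reducing to the differentiated equation $au_{ii}+a_i u_i=0$ and the radial-symmetry identity $-x_2 a_1+x_1 a_2=0$ already employed in the proof of Lemma~\ref{lemmaTjR}; this would closely mirror that proof but would be somewhat more tedious.
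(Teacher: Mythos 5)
Your proposal is correct, and it takes a genuinely different route from the paper. The paper proves the two identities by brute-force Cartesian expansion of $R_i$ and $T_i$, using only the algebraic facts $a_1x_2=a_2x_1$ and $a_iu_i=-au_{ii}$ (essentially your "alternative route" in the last sentence). You instead recognize that $T=\partial_\theta u$ and $R=ar\,\partial_r u$, so that after factoring out $a_i=(a'/r)x_i$ the two identities become the conjugate-function relations $\partial_\theta R=ar\,\partial_r T$ (symmetry of mixed partials plus $\theta$-independence of $a$) and $r\,\partial_r R=-a\,\partial_\theta T$ (the PDE written in polar form). This is conceptually cleaner and explains \emph{why} the identities hold: they are the Cauchy--Riemann-type relations underlying the classical stream-function duality for $\mathrm{div}(a\nabla u)=0$ in two dimensions, which is also the structural reason $R$ satisfies the equation with $a$ replaced by $a^{-1}$ in Lemma~\ref{lemmaTjR}. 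The computations you outline (the polar form of the divergence and the dictionary $r\partial_r=x_i\partial_i$, $\partial_\theta=-x_2\partial_1+x_1\partial_2$) all check out. One small point of bookkeeping: "extended by continuity" is not quite the right justification for the set where $a'=0$ (that set has nonempty interior, since $a$ is locally constant away from the $\ve$-collar); the correct remark is simply that where $a'=0$ one has $a_1=a_2=0$, so both sides of both claimed identities vanish and there is nothing to prove --- equivalently, your proof direction should be stated as: establish the reduced polar identities everywhere, then multiply by $a'/r$, which is legitimate everywhere.
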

\begin{proof}
We use only the equations $a_1x_2 = a_2 x_1$ and $a_i u_i = - a u_{ii}$.  To prove the first equation of (\ref{SQ}), compute
\[
\begin{split}
a_1 R_1+a_2 R_2 = {} & (a_1^2 + a_2^2) (x_1u_1+x_2u_2) + aa_1 (u_1 + x_1 u_{11} + x_2 u_{12}) \\{} &  + a a_2 (x_1 u_{12} + u_2 + x_2 u_{22}) \\
= {} & a_1^2 x_1 u_1 + a_2 a_1 x_2 u_1 + a_1 a_2 x_1 u_2 + a_2^2 x_2 u_2 + aa_i u_i \\ {} & + aa_1 x_1 u_{11} + aa_1 x_2 u_{12} + aa_2 x_1 u_{12} + aa_2 x_2 u_{22}  \\
= {} & - x_1 a_1 a u_{ii} - x_2 a_2 a u_{ii} + aa_i u_i  \\ {} & + aa_1 x_1 u_{11} + aa_1 x_2 u_{12} + aa_2 x_1 u_{12} + aa_2 x_2 u_{22} \\
= {} & - a x_1 a_1 u_{22} - a x_2 a_2 u_{11} + aa_i u_i + aa_1 x_2 u_{12} + aa_2 x_1 u_{12},
\end{split}
\]
and
\[
\begin{split}
a(a_2 T_1 - a_1 T_2) = {} & a (a_2 (-x_2 u_{11} + u_2 + x_1 u_{21}) - a_1( -u_1 - x_2 u_{12} + x_1 u_{22} )) \\
= {} & a a_i u_i - a a_2 x_2 u_{11} + a a_2 x_1 u_{21} + aa_1 x_2 u_{12} - a a_1 x_1 u_{22},
\end{split}
\]
as required.

For the second equation, compute
\[
\begin{split}
-a_2 R_1 + a_1 R_2 = {} & -a_2 a_1 (x_1 u_1+x_2 u_2) + a_1 a_2 (x_1u_1+x_2u_2) \\ {} & - aa_2 (u_1+ x_1 u_{11} + x_2 u_{21}) + aa_1 (x_1u_{12} + u_2 + x_2 u_{22})\\
= {} & a \bigg( - a_2 u_1 - a_1 x_2 u_{11} - a_2 x_2 u_{12} + a_1 x_1 u_{12} + a_1 u_2+ a_2 x_1 u_{22} \bigg) \\
= {} & a\bigg( a_1 (-x_2 u_{11} + u_2 + x_1 u_{12}) + a_2 (- u_1 - x_2 u_{12} + x_1 u_{22}) \bigg) \\
= {} & a (a_1 T_1+ a_2 T_2),
\end{split}
\]
completing the proof of the lemma.
\end{proof}


We can now find the equation satisfied by a linear combination of $T$ and $R$.

\begin{lemma} \label{combination}
For $\alpha, \beta \in \mathbb{R}$, we have
\begin{equation}
(\alpha T+ \beta R)_{ii} = \left( \frac{\beta^2 a^2 - \alpha^2}{\beta^2 a^2+\alpha^2} \right) \frac{a_i}{a} (\alpha T+\beta R)_i + \frac{2\alpha \beta}{\beta^2a^2+\alpha^2} (a_2 (\alpha T+\beta R)_1 - a_1 (\alpha T+\beta R)_2).
\end{equation}
\end{lemma}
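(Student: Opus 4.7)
Set $W = \alpha T + \beta R$. The plan is to first use Lemma \ref{lemmaTjR} to rewrite the Laplacian $W_{ii}$ purely as a first-order expression in the $T_i$'s and $R_i$'s weighted against $a_i$, and then use Lemma \ref{lemmaQS2} to re-express that quantity in terms of the two fundamental gradient combinations $a_i W_i = a_1 W_1 + a_2 W_2$ and $a_2 W_1 - a_1 W_2$ that appear on the right-hand side.

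Step one is immediate: Lemma \ref{lemmaTjR} gives $T_{ii} = -a_i T_i / a$ and $R_{ii} = a_i R_i / a$, hence
\[
W_{ii} = \frac{1}{a}\bigl(-\alpha\, a_i T_i + \beta\, a_i R_i\bigr).
\]
Step two is the substance of the computation. Introduce the abbreviations $P = a_2 T_1 - a_1 T_2$ and $Q = a_2 R_1 - a_1 R_2$. Expanding the two natural gradient combinations of $W$ and applying Lemma \ref{lemmaQS2} yields the linear system
\[
a_i W_i = \beta a\, P - \tfrac{\alpha}{a}\, Q, \qquad a_2 W_1 - a_1 W_2 = \alpha P + \beta Q,
\]
while the quantity we wish to understand is
\[
-\alpha\, a_i T_i + \beta\, a_i R_i = \tfrac{\alpha}{a}\, Q + \beta a\, P,
\]
which differs from $a_i W_i$ only by replacing $-\alpha/a$ with $+\alpha/a$; equivalently it equals $a_i W_i + (2\alpha/a) Q$.

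Step three is to solve the $2\times 2$ system above for $Q$. Eliminating $P$ gives
\[
Q \;=\; \frac{\beta a^{2}(a_2 W_1 - a_1 W_2) - \alpha a\, a_i W_i}{\alpha^{2} + \beta^{2} a^{2}}.
\]
Substituting this into $a_i V_i := -\alpha\, a_i T_i + \beta\, a_i R_i = a_i W_i + (2\alpha/a) Q$ and dividing by $a$ produces exactly the claimed formula, with the coefficient of $a_i W_i / a$ simplifying to $(\beta^{2} a^{2} - \alpha^{2})/(\alpha^{2} + \beta^{2} a^{2})$ and the coefficient of $a_2 W_1 - a_1 W_2$ simplifying to $2\alpha\beta/(\alpha^{2} + \beta^{2} a^{2})$.

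This is purely mechanical once the two lemmas are in hand; the only thing that needs care is keeping track of signs and factors of $a$ in the identities of Lemma \ref{lemmaQS2}, which I view as the main (minor) obstacle. No further geometric input is required.
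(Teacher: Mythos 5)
Your argument is correct and rests on exactly the same two ingredients as the paper's proof: Lemma \ref{lemmaTjR} to write $(\alpha T+\beta R)_{ii}=\tfrac{1}{a}(-\alpha a_iT_i+\beta a_iR_i)$, and Lemma \ref{lemmaQS2} to convert between $\{a_iT_i,\,a_iR_i\}$ and the combinations $a_iW_i$, $a_2W_1-a_1W_2$. The only difference is direction — the paper verifies the stated identity by expanding its right-hand side back to $-\tfrac{\alpha a_i}{a}T_i+\tfrac{\beta a_i}{a}R_i$, whereas you derive the coefficients by solving the $2\times 2$ system — which is a cosmetic distinction, not a different route.
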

\begin{proof}
From Lemma \ref{lemmaQS2}, we have
\[
\begin{split}
\lefteqn{\left( \frac{\beta^2 a^2 - \alpha^2}{\beta^2 a^2+\alpha^2} \right) \frac{a_i}{a} (\alpha T+\beta R)_i + \frac{2\alpha \beta}{\beta^2a^2 + \alpha^2} (a_2 (\alpha T+\beta R)_1 - a_1 (\alpha T+\beta R)_2)} \\= {} & 
\left( \frac{\beta ^2 a^2 - \alpha^2}{\beta^2 a^2+\alpha^2} \right) \frac{a_i}{a} (\alpha T+\beta R)_i+ \frac{2\alpha \beta}{\beta^2a^2 + \alpha^2} \left( \frac{\alpha}{a} a_i R_i - \beta a a_i T_i  \right) \\
= {} & \left( \frac{\beta^2 a^2 - \alpha^2 - 2\beta^2 a^2}{\beta^2 a^2+\alpha^2} \right) \frac{a_i}{a} \alpha T_i + \left( \frac{\beta^2 a^2 - \alpha^2 + 2\alpha^2}{\beta^2 a^2+\alpha^2} \right) \frac{a_i}{a} \beta R_i \\
= {} & -\frac{\alpha a_i}{a} T_i + \frac{\beta a_i}{a} R_i = (\alpha T+ \beta R)_{ii},
\end{split}
\]
as required.
\end{proof}

\section{Proof of Theorem \ref{sa}} \label{section4}


Define closed sets $W^+$ and $W^-$ by
\[
\begin{split}
W^+:= {} & \{ x \in \Omega \ | \ \textrm{dist}(x, \partial B^+) \le \mu, \ x_2 \ge -1-\delta\} \\
W^-:={} & \{ x \in \Omega \ | \ \textrm{dist}(x, \partial B^-) \le \mu, \ x_2\le -1-\delta\},
\end{split}
\]
which are contained in $\Omega^{\mu}$.
By the classical interior estimate (\ref{classical}), $|\nabla u|$ is uniformly bounded  on $\Omega^{\mu} \setminus  \textrm{int} (W^+ \cup W^-)$.  Hence $|\nabla u|$ is bounded on $\partial W^+$ and $\partial W^-$ except for  on the edge where $x_2=-1-\delta$ and $x_1$ is small.  

Let $c_1>0$ be the largest constant  so that the line segment 
$$S: = \{ (x_1, x_2) \ | \ -c_1\le x_1 \le c_1, \ x_2=-1-\delta \},$$
is contained in $W^+\cup W^-$.  

\begin{lemma} \label{lemma4} We have
\begin{equation} \label{firstclaim} 
\sup_{\Omega^{\mu}} |\nabla u| \le C (\sup_S|\nabla u| +  1 ).
\end{equation}
\end{lemma}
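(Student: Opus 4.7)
The plan is to bound $|\nabla u|$ on the three regions $W^+$, $W^-$ and $\Omega^\mu\setminus\mathrm{int}(W^+\cup W^-)$ separately. On $\Omega^\mu\setminus\mathrm{int}(W^+\cup W^-)$ every point has a disk of definite radius (depending only on $\mu$) contained in one of $B^+$, $B^-$ or $\Omega\setminus(B^+\cup B^-)$, hence in a region where $a=a_\ve$ is constant once $\ve\ll\mu$; the classical estimate \eqref{classical} and the $L^\infty$ bound \eqref{lib} then give $|\nabla u|\le C$ there. The real work is to handle $W^+$ (and by symmetry $W^-$).

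For this I would apply the maximum principle to $T$ and $R$ on $W^+$. By Lemma \ref{lemmaTjR}, both $T$ and $R$ satisfy linear, strictly elliptic equations with no zeroth-order term on $W^+\subset\{x_2>-1-\tfrac{3}{2}\delta\}$. Hence the weak maximum principle applied to $\pm T$ and $\pm R$ yields
\begin{equation*}
\sup_{W^+}|T|\le \sup_{\partial W^+}|T|, \qquad \sup_{W^+}|R|\le \sup_{\partial W^+}|R|.
\end{equation*}
Now $\partial W^+$ decomposes as $S\cup(\partial W^+\setminus S)$: the complement consists of arcs lying at distance exactly $\mu$ from $\partial B^+$ (either inside or outside $B^+$), and at each such point the bound $|\nabla u|\le C$ from the first paragraph applies. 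Since $|x|\le 1+\mu$ and $a$ is bounded on $W^+$, the definitions of $T$ and $R$ give $|T|+|R|\le C|\nabla u|$ pointwise, so
\begin{equation*}
\sup_{\partial W^+\setminus S}(|T|+|R|)\le C, \qquad \sup_{S}(|T|+|R|)\le C\sup_S|\nabla u|,
\end{equation*}
and hence $\sup_{W^+}(|T|+|R|)\le C\bigl(1+\sup_S|\nabla u|\bigr)$.

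To convert this back into an estimate for $|\nabla u|$, invert the linear system $T=-x_2u_1+x_1u_2$, $R/a=x_1u_1+x_2u_2$. Its determinant is $-|x|^2$, and on $W^+$ we have $|x|\ge 1-\mu>0$ while $a$ is bounded away from $0$, so the inverse has uniformly bounded coefficients and $|\nabla u|\le C(|T|+|R|)$ throughout $W^+$. This gives $\sup_{W^+}|\nabla u|\le C(1+\sup_S|\nabla u|)$; the identical argument on $W^-$ and the bound on the complement combine to yield \eqref{firstclaim}. I do not anticipate a substantive obstacle: the one point requiring care is that the arcs of $\partial W^+\setminus S$ lie at distance $\mu$ from $\partial B^+$, where $a=a_\ve$ is constant in a neighborhood once $\ve<\mu/2$, so that the classical interior estimate is legitimately applicable on a ball of definite size about each such boundary point.
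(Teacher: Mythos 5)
Your proof is correct and follows essentially the same route as the paper: bound $|\nabla u|$ on $\Omega^\mu\setminus\mathrm{int}(W^+\cup W^-)$ by the classical interior estimate, apply the maximum principle to quantities built from $T$ and $R$ on $W^\pm$ (which the paper does via Lemma \ref{combination} for $\alpha T+\beta R$, while you use Lemma \ref{lemmaTjR} for $T$ and $R$ separately -- an immaterial difference here), and recover $|\nabla u|\le C(|T|+|R|)$ by inverting the nondegenerate linear system. No gaps.
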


\begin{proof} The maximum principle and Lemma \ref{combination} implies that
$$\sup_{W^+} |\alpha T + \beta R|\le \sup_S | \alpha T + \beta R| + C( | \alpha| + |\beta|) \le C'(|\alpha| + |\beta|) (\sup_S |\nabla u| + 1)$$
for any $\alpha, \beta \in \mathbb{R}$. Since at any point in $W^+$, $|\nabla u|$ can be expressed as a linear combination of $\alpha T + \beta R$ for uniformly bounded $\alpha, \beta$,  it follows that
$$\sup_{W^+} |\nabla u| \le C( \sup_S |\nabla u| + 1),$$
and we obtain a similar bound for $W^-$. The result follows. \end{proof}

We now define a thin rectangular region $E$, with $S \subset E \subset W^+ \cup W^-$, by
$$E = \left\{ (x_1, x_2) \ | \ -c_1 \le x_1 \le c_1,  \ |x_2 +1+\delta | \le \frac{\delta}{2} \right\}.$$
See Figure \ref{fig2}. We will prove that $|\nabla u|$ is bounded on $E$, which by Lemma \ref{lemma4} will suffice to prove Theorem \ref{sa}.

\begin{figure}[h]
\begin{tikzpicture}

\def\firstcircle{(6cm,8cm) circle (1.6cm)}
\def\secondcircle{(6cm,8cm) circle (2.4cm)}
\def\thirdcircle{(6cm, 3.4cm) circle (2.4cm)}
\def\fourthcircle{(6cm,3.4cm) circle (1.6cm)}
\def\fifthcircle{(6cm,3.4cm) circle (2.4cm)}
\begin{scope}
\clip \firstcircle (0,0cm) rectangle (12cm, 12cm);
\clip \thirdcircle (0,0cm) rectangle (12cm, 12cm);
\draw[fill={gray!15}, draw=none]  \secondcircle;
\end{scope}
\begin{scope}
\clip \secondcircle (0,0cm) rectangle (12cm, 12cm);
\clip \fourthcircle (0,0cm) rectangle (12cm, 12cm);
\draw[fill={gray!15}, draw=none]  \fifthcircle;
\end{scope}
       \draw (6cm,8cm) circle (2cm);
 \draw (6cm, 3.4cm) circle (2cm);
 \draw [dotted] (2,5.7) -- (10, 5.7); 
 \node(H) at (10.8, 5.72) {$\scriptstyle x_2=-1-\delta$};
  \node(E) at (6.9, 5.85) {$\scriptstyle E$};
 \draw[draw=none, fill={gray!60}] (5.3, 5.55) rectangle ++ (1.4, 0.3);
 \draw[thick] (5.3, 5.7) -- (6.7, 5.7);
     \node(A) at (3.5, 9cm) {$\scriptstyle W^+$};
        \node(B) at (3.5, 2.4cm) {$\scriptstyle W^-$};
        \draw[<->] (8,8.0) -- (8.4,8);
          \node(C) at (8.19, 7.72cm) {$\scriptstyle \mu$};
\end{tikzpicture}
\caption{The light shaded regions are the closed sets $W^+$ and $W^-$.  The dark shaded region is $E$.  The horizontal line segment shown running through the center of $E$ is the set $S$.}  \label{fig2}
\end{figure}
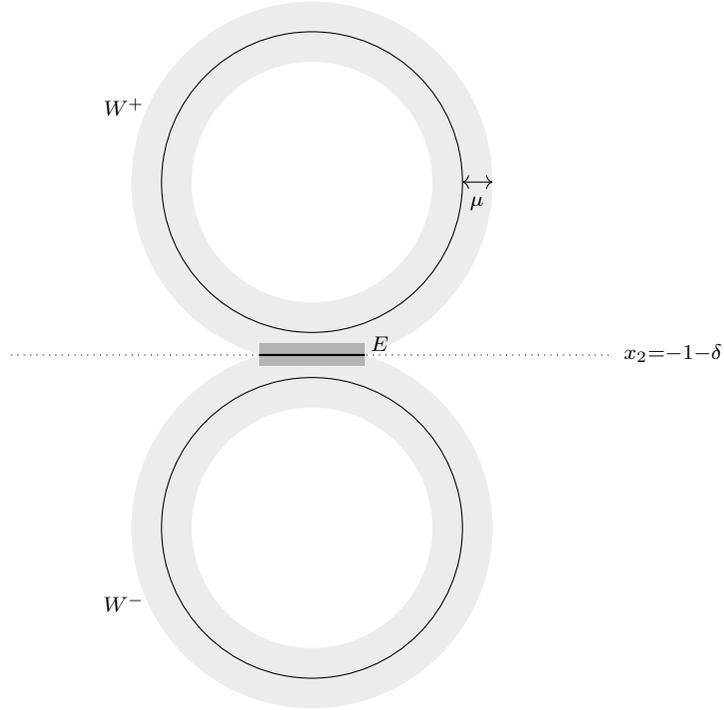

Consider the quantity $$M:= |\nabla u|^2+Au^2$$ on $E$ for a uniform constant $A>1$ to be determined later.  We will prove that $M$ is bounded on $E$.
Since $M$ is subharmonic on $E$, it achieves a maximum at a point $p$ on the boundary of $E$.  Without loss of generality, we may assume that  $p$ lies on the upper boundary of $E$ (if $p$ lies on one of the side edges we are done, since $|\nabla u|$ is bounded there).

We may assume that at $p$, there are constants $\alpha, \beta$ with $\alpha^2+\beta^2=1$ such that
\begin{equation} \label{atp}
|\nabla u(p)|^2 = \frac{(\alpha T + \beta R)^2(p)}{|p|^2}.
\end{equation}
Indeed, for fixed $x\neq 0$,  the vectors $(-x_2, x_1)/|x|$ and $(x_1, x_2)/|x|$  form an orthonormal basis for $\mathbb{R}^2$ and on $E$ we have $T=(-x_2, x_1)\cdot \nabla u$ and $R = (x_1, x_2) \cdot \nabla u$, recalling that $a=1$ there.  We now fix this $\alpha$ and $\beta$.

Assume first that $\kappa^+ \ge 1$, so that in particular $a \ge 1$ on $W^+$.  Later we will explain how to deal with the case when $0<\kappa^+ <1$.

Let  $b$ be  the smooth radial function on $W^+$ such that, writing $b=b(|x|)$,  
\begin{equation} \label{bdefn}
b(1+\ve)=0, \quad b'(|x|)= 1-a(|x|)^K\le 0,
\end{equation}
for $K$ a large uniform positive constant to be determined.
Here we are writing $a=a(|x|)$ for the nonincreasing function $a_{\ve}(|x|)$ on $W^+$, which is equal to $\kappa^+$ for $|x|\le1-\ve$ and equal to $1$ for $|x|\ge1+\ve$.
We have \begin{equation} \label{bfirst}
\partial_{x_i} b = b' \frac{x_i}{|x|}, \quad i=1, 2, \qquad \Delta b = b'' + \frac{b'}{|x|} = Ka^{K-1}|\nabla a| + \frac{b'}{|x|}.
\end{equation}
Observe that $|\nabla a|$ is unbounded (i.e. blows up when $\ve \rightarrow 0$) when $|x|$ is close enough to $1$.  
We have the bounds
\begin{equation} \label{bbounds}
 |\nabla b|\le  a^K, \quad 0\le b \le (\kappa^+)^K.
\end{equation} 
Note that $b=0$ on $E$.  Also, since in the end we will take the limit $\ve \rightarrow 0$, we may assume without loss of generality that 
\begin{equation} \label{bb}
0 \le b(|x|) \le 1, \quad \textrm{for } 1-\ve \le |x| \le 1+\ve,
\end{equation}
and in particular this estimate holds whenever $\nabla a\neq 0$.

We now consider the quantity
$$N: =   \frac{ (\alpha T + \beta R)^2}{|x|^2}(1+b) +Au^2,$$
on $W^+$.

On $E$ we have, recalling that $\alpha^2+\beta^2=1$,
$$ \frac{ (\alpha T + \beta R)^2}{|x|^2}(1+b) =  \frac{ (\alpha T + \beta R)^2}{|x|^2} \le |\nabla u|^2,$$
and so  $N \le M$ on $E$.  But from (\ref{atp}) we have $M(p)=N(p)$.  

Assuming that $N$ achieves its maximum value on $W^+$, we may assume it attains this at an interior point.  Indeed, $N$ has at least as large a value at the interior point $p$ than its value on the bottom edge of $W^+$ where $x_2=-1-\delta$, and $N$ is bounded on all other boundary points of $W^+$.
Hence to bound $N$ on $W^+$ it suffices to show that $N$ is uniformly bounded from above at an interior maximum on $W^+$.  It will then follow that $M$ is bounded at $p$ and hence on $E$, giving the required bound for $|\nabla u|$  on $E$.

Suppose then that $N$ achieves an interior maximum at $x\in W^+$.  Clearly we may assume without loss of generality that 
\begin{equation} \label{uA}
|\nabla u(x)| \ge A \ge 1,
\end{equation}
since $A$ will be chosen uniformly.  Observe  that,
\begin{equation} \label{usefulequivalence}
C^{-1}|(\alpha T+ \beta R)(x)| \le |\nabla u(x)| \le C\sqrt{1+b(x)} \, |(\alpha T+ \beta R)(x)|,
\end{equation}
a fact that we will use repeatedly in what follows.  
Indeed, for the upper bound,
$$|\nabla u(x)| \le C\sqrt{M(p)} = C \sqrt{N(p)} \le C \sqrt{N(x)} \le C'\bigg( \sqrt{1+b(x)}\, |(\alpha T + \beta R)(x)| + \sqrt{A}\bigg),$$
where the first inequality used (\ref{firstclaim}) and the last inequality used (\ref{lib}).  Recalling (\ref{uA}), as long as, say
\begin{equation} \label{AC}
A \ge 4(C')^2,
\end{equation}
we have $C'\sqrt{A} \le \frac{1}{2}|\nabla u|(x)$  and 
the second inequality of (\ref{usefulequivalence}) follows.
Here and henceforth, $C$, $C'$, $C_0$  denote uniform constants, which may differ from line to line, which are independent of $\delta, \ve, A$ and  $K$.

We also have at $x$,
\[
\begin{split}
0 = N_i= {} &   \frac{2(\alpha T + \beta R)(\alpha T + \beta R)_i}{|x|^2}(1+b)  + (\alpha T + \beta R)^2 \left( \frac{1+b}{|x|^2} \right)_i + 2Auu_i,
\end{split}
\]
and so
$$(\alpha T + \beta R)_i =   \frac{(\alpha T + \beta R)x_i}{|x|^2} - \frac{(\alpha T +\beta R) b_i}{2(1+b)} - \frac{Au u_i |x|^2}{(1+b)(\alpha T+\beta R)}.$$
Hence we have  the bound at $x$,
\begin{equation} \label{dTR}
| \nabla (\alpha T + \beta R)| \le C a^K | \alpha T + \beta R| + CA,
\end{equation}
where we are using  (\ref{bbounds}) and (\ref{bb}).

 Using  (\ref{bfirst}), we compute at $x$, 
\begin{equation} \label{long}
\begin{split}
0 \ge  \Delta N    =  {} &  \frac{2 (\alpha T + \beta R)(\alpha T + \beta R)_{ii}}{|x|^2} (1+b) +  \frac{2  \left|\nabla (\alpha T + \beta R) \right|^2}{\left| x \right|^2}(1+b)   \\ {} & 
 +  (\alpha T + \beta R)^2  \left( \frac{1}{\left| x \right|^2} \right)_{ii} (1+b)   + \frac{(\alpha T + \beta R)^2}{|x|^2}K a^{K-1} |\nabla a| \\ {} &  + \frac{(\alpha T+ \beta R)^2}{|x|^2}\frac{b'}{|x|}  + 4 (\alpha T+ \beta R) (\alpha T+\beta R)_i \left( \frac{1+b}{\left| x \right|^2} \right)_i  \\ {} & + 2(\alpha T + \beta R)^2 \left( \frac{1}{|x|^2} \right)_i b_i + 2A | \nabla u|^2 - \frac{2Au a_i u_i}{a}\\
=: {} &  \boxed{1} + \boxed{2} + \cdots + \boxed{9}.
\end{split}
\end{equation}
We now choose $$A = C \max\{ (\kappa^+)^{2K},(\kappa^+)^{-2K}, (\kappa^-)^{2K},(\kappa^-)^{-2K}  \},$$ for $C$ sufficiently and uniformly large so that  $\boxed{3} + \boxed{5}+ \boxed{6}+\boxed{7}$ can all be controlled by $\boxed{2}+\boxed{8}$.  (Note that here $A=C(\kappa^+)^{2K}$ suffices but the choice of constant $A$ must  work for all possible cases).  We also assume that $C$ is sufficiently large so that (\ref{AC}) holds.  We will use $\boxed{4}$ to control $\boxed{1}$ and $\boxed{9}$. 
 From Lemma \ref{combination} and (\ref{dTR}), we have
$$\boxed{1} \ge - C_0 |\nabla a| a^K | \alpha T + \beta R|^2  - C A |\nabla a| \, | \alpha T + \beta R|,$$
where we recall that $0\le b\le 1$ when $\nabla a\neq 0$ from (\ref{bb}).

We put this together to obtain
\begin{equation} \label{calcend}
\begin{split}
0 > {} & - C_0 |\nabla a| a^K ( \alpha T + \beta R)^2  - C A |\nabla a| \, | \alpha T + \beta R|  \\ {} & + \frac{(\alpha T + \beta R)^2}{|x|^2} K a^{K-1} |\nabla a| - C A |\nabla a| |\nabla u|.
\end{split}
\end{equation}
Choosing $K=2 \max\{\kappa^+, \kappa^-\} C_0$  gives 
$$|\alpha T + \beta R|(x) \le \frac{C A}{a^K}.$$
Hence $N$ is uniformly bounded from above and this completes the proof in the case when $\kappa^+>1$.  

For the case $0<\kappa^+<1$, replace the definition (\ref{bdefn}) by
\begin{equation*} \label{bdefn2}
b(1+\ve)=0, \quad b'(|x|)= 1-a(|x|)^{-K}\le 0.
\end{equation*}
Then in  (\ref{bfirst}) and (\ref{bbounds}) we now have
 \begin{equation*} \label{bfirst2}
\Delta b= Ka^{-K-1}|\nabla a| + \frac{b'}{|x|}
\end{equation*}
and
\begin{equation*} \label{bbounds2}
 |\nabla b|\le  a^{-K}, \quad 0\le b \le (\kappa^+)^{-K}.
\end{equation*} 
The rest of the proof follows in a similar way.

\section{Remarks on the case $n>2$} \label{section5}

It is an open problem to extend our method to higher dimensions $n>2$.  A missing result is an analogue of Lemma \ref{combination}.  We mention here, without proof, some formulae which  hold for all dimensions $n \ge 2$.   For $j, k=1, \ldots, n$ with $j\neq k$, define
 $$T^{j,k} = -x_k u_j + x_j u_k, \quad  R= \sum_{k=1}^n a x_k u_k ,$$
Then working in a neighborhood of a ball $B^+ \subset \mathbb{R}^n$ centered at the origin, we have the following analogue of Lemma \ref{lemmaTjR}:
$$a(T^{j,k})_{ii}  + a_i (T^{j,k})_i =0 \quad \textrm{and} \quad 
a R_{ii} - a_i R_i = (n-2)aa_iu_i.$$
An analogue of Lemma \ref{lemmaQS2} are the equations
$$-a_k R_j + a_j R_k =  a \sum_{i=1}^n a_i (T^{j,k})_i$$
and 
$$\sum_{j=1}^n a_j R_j + (n-2) \frac{a'R}{|x|} = \frac{a}{2} \sum_{j,k=1}^n \left( a_k (T^{j,k})_j - a_j (T^{j,k})_k \right). 
$$

\end{document}